 \newtheorem{thm}{Theorem}[section]
 \newtheorem{lem}[thm]{Lemma}
 \newtheorem{prop}[thm]{Proposition}
 \theoremstyle{definition}
 \theoremstyle{remark}
 \newtheorem{rem}[thm]{Remark}
 \numberwithin{equation}{section}
 \newcommand{\R}{\mathcal{R}}
\newcommand{\E}{\mbox{E}}
\title{Bandwidth of the product of paths of the same length}
\author{Louis J. Billera and Sa\'ul A. Blanco}
\thanks{Supported, in part, by NSF grant DMS-0555268.}
\address{Department of Mathematics, Cornell University, Ithaca, NY, 14853}
\email{billera@math.cornell.edu}
\address{Department of Mathematical Sciences, DePaul University, Chicago, IL, 60614}
\email{sblancor@depaul.edu}
\begin{document}

\begin{abstract}
In this note we give a numerical expression for the bandwidth $bw(P_{n}^{d})$ of the $d$-product of a path with $n$ edges, $P_{n}^{d}$. We prove that this bandwidth is given by the sum of certain multinomial coefficients. We also show that $bw(P_{n}^{d})$ is bounded above and below by the largest coefficient in the expansion of $(1+x+\cdots+x^{n})^{k}$, with $k\in\{d,d+1\}$. Moreover, we compare the asymptotic behavior of  $bw(P_{n}^{d})$ with the bandwidth of the labeling obtained by ordering the vertices of $P_{n}^{d}$ in lexicographic order. 
\end{abstract}

\maketitle

\section{Introduction}

Given a graph $G=(V,E)$, a \emph{labeling} $f$ of $G$ is a bijection, $f:V\to[n]$, where $n=|V|$. The \emph{bandwidth} $bw(f)$ of $f$ is defined as
\[
bw(f)=\max_{(u,v)\in E}|f(u)-f(v)|.
\]

The \emph{bandwidth} of $G$, denoted by $bw(G)$, is the minimum bandwidth over all possible labelings of $G$, that is,
\[
bw(G)=\min_{f}bw(f).
\] 
The \emph{bandwidth problem} consists of finding the bandwidth of a given graph. Lai and Williams~\cite{LK99} provided an end-of-the-century survey containing some of the main results known at the time.

 In this note we give a numerical expression for the bandwidth of the $d$-product of a path with $n$ edges, $P_{n}^{d}$. The vertices of $P_{n}^{d}$ are labeled with elements of the set $\{0,1,\ldots,n\}^{d}$, and two vertices $u$ and $v$ are connected by and edge if and only if $u$ and $v$ differ only at one component in which they have absolute difference one (see Figure~\ref{fig:hales_3^2} for an illustration of $P_{2}^{2}$).

One of the classical results on the field is the following explicit formula for the bandwidth of the hypercube $Q_{d}=P_{1}^{d}$ of dimension $d>0$:
\begin{equation}\label{eq:hypercube}
bw(Q_{d})=\sum_{i=0}^{d-1}\binom{i}{\lfloor\frac{i}{2}\rfloor}
\end{equation}
This result first appeared, without proof, in~\cite{H66} (cf. \cite[Corollary 4.4]{H04}). The first appearance in the literature of a proof for~(\ref{eq:hypercube}) was given by~\cite{WWD09} and utilizes the recursive nature of the Hales order, which we describe in the Section~\ref{sec:P_n^d}.

In his pioneering paper \cite{H66}, Harper shows  that the labeling given by the Hales order, defined in the following section,  solves the bandwidth problem for $Q_{n}$. His result was later generalized by Moghadam~\cite{M05}, who showed that the Hales order, under the natural modification of the weight function, also solves the bandwidth problem for graphs that are product of paths of any length.

In this note we provide a formula for $bw(P_{n}^{d})$ in terms of sums of coefficients in the expansion of $(1+x+x^{2}+\cdots+x^{n})^{d}$. In particular, the central multinomial coefficients provide a lower and upper bound for $bw(P_{n}^{d})$. Furthermore, we show that $bw(P_{n}^{d})$ is $o(bw(L_{n}^{d}))$ as $d\to\infty$, where $L_{n}^{d}$ is the lexicographic order of the vertices of $P_{n}^{d}$. This asymptotic comparison was our original motivation to study the bandwidth of the product of graphs. The lex order was proposed to us by a colleague as a labeling to approximate the bandwidth for the discrete Laplacian. It turns out that the lex order is much worse than the Hales order. This asymptotic comparison is carried out  in Section~\ref{sec:lex}.

Theorem~\ref{thm:main} and its proof are extensions of~\cite[Theorem 2]{WWD09} and its proof.

For the rest of the paper, $n$ will denote an arbitrary fixed integer larger than one.

\section{Hales order on $P_{n}^{d}$}\label{sec:P_n^d}

The \emph{Hales order} lists the elements of $V(P_{n}^{d})$ as follows: $u\leq v$ if and only if (i) $w(u)<w(v)$ or (ii) if $w(u)=w(v)$ and $u$ is greater than or equal to $v$ in lexicographic order relative to the right-to-left order of the coordinates. Here, $w(x)$ denotes the \emph{weight} of $x\in V(Q_{d})$ defined as the sum of the components of $x$.  Hales order is often referred to as \emph{graded reverse lexicographic} order.

For instance, the vertices of $P_{2}^{2}=\{0,1,2\} ^{2}$ are listed as follows: $00<01<10<02<11<20<12<21<22$. The order can be obtained by sweeping the hyperplane $y+x=0$ from $(0,0)$ to $(2,2)$, as depicted in Figure~\ref{fig:hales_3^2}. Each time the hyperplane intersects vertices of $P_{2}^{2}$, they are listed according to the direction indicated in the figure.

\begin{figure}[h]
\label{fig:hales_3^2}
\begin{center}
\setlength{\unitlength}{2000sp}%
\begingroup\makeatletter\ifx\SetFigFont\undefined%
\gdef\SetFigFont#1#2#3#4#5{%
  \reset@font\fontsize{#1}{#2pt}%
  \fontfamily{#3}\fontseries{#4}\fontshape{#5}%
  \selectfont}%
\fi\endgroup%
\begin{picture}(5059,5059)(3589,-4573)
\put(4514,-1051){\makebox(0,0)[lb]{\smash{{\SetFigFont{10}{16.8}{\familydefault}{\mddefault}{\updefault}{\color[rgb]{0,0,0}2}%
}}}}
\thinlines
{\color[rgb]{0,0,0}\multiput(4801,-2161)(117.07317,0.00000){21}{\line( 1, 0){ 58.537}}
}%
{\color[rgb]{0,0,0}\multiput(6001,-961)(0.00000,-117.07317){21}{\line( 0,-1){ 58.537}}
}%
{\color[rgb]{0,0,1}\put(6001,239){\vector( 1,-1){2400}}
}%
{\color[rgb]{0,0,1}\put(4801,239){\vector( 1,-1){3600}}
}%
{\color[rgb]{0,0,1}\put(3601,239){\vector( 1,-1){4800}}
}%
{\color[rgb]{0,0,1}\put(3601,-961){\vector( 1,-1){3600}}
}%
{\color[rgb]{0,0,1}\put(3601,-2161){\vector( 1,-1){2400}}
}%
\thicklines
{\color[rgb]{0,0,0}\put(4801,-3361){\vector( 0, 1){3825}}
\put(4801,-3361){\vector( 1, 0){3825}}
}%
\put(5941,-3706){\makebox(0,0)[lb]{\smash{{\SetFigFont{10}{16.8}{\familydefault}{\mddefault}{\updefault}{\color[rgb]{0,0,0}1}%
}}}}
\put(7081,-3721){\makebox(0,0)[lb]{\smash{{\SetFigFont{10}{16.8}{\familydefault}{\mddefault}{\updefault}{\color[rgb]{0,0,0}2}%
}}}}
\put(4516,-2236){\makebox(0,0)[lb]{\smash{{\SetFigFont{10}{16.8}{\familydefault}{\mddefault}{\updefault}{\color[rgb]{0,0,0}1}%
}}}}
\put(4561,-3616){\makebox(0,0)[lb]{\smash{{\SetFigFont{10}{16.8}{\familydefault}{\mddefault}{\updefault}{\color[rgb]{0,0,0}0}%
}}}}
\thinlines
{\color[rgb]{0,0,0}\multiput(4801,-3361)(117.07317,0.00000){21}{\line( 1, 0){ 58.537}}
\multiput(7201,-3361)(0.00000,117.07317){21}{\line( 0, 1){ 58.537}}
\multiput(7201,-961)(-117.07317,0.00000){21}{\line(-1, 0){ 58.537}}
\multiput(4801,-961)(0.00000,-117.07317){21}{\line( 0,-1){ 58.537}}
}%
\end{picture}%
\end{center}
\caption{Hales order on $P_{2}^{2}=\{0,1,2\}^{2}$}
\end{figure}

\subsection{Recurrence nature of Hales order}\label{subsec:hales}
The Hales order lists the elements by weight and then by reverse lexicographic order from right-to-left (so $2<1<0$). 
The Hales order exhibits a recursive definition, which we now describe. For $0\leq k\leq nd$ define the $(n+1)^{d}\times d$ matrix $H_{n}^{d}$ with entries in $\{0,1,\ldots,n\}$ as follows:

\begin{equation}\label{eq:H_{d}}
H_{n}^{d}=\begin{bmatrix}
A_{0}^{(n,d)}\\
A_{1}^{(n,d)}\\
A_{2}^{(n,d)}\\
\vdots\\
A_{nd}^{(n,d)}
\end{bmatrix},
\end{equation}
where the $A_{i}^{(n,d)}$ are defined below. Here $\mathbf{k}$ denotes the column vector whose components are all $k$, with $0\leq k\leq nd$, and we let $A_{i}^{(n,1)}=[i]$ for $0\leq i\leq n$.  For $d\geq 2$, define

1) $A_{0}^{(n,d)}=\begin{bmatrix}0&0&\cdots&0 \end{bmatrix}$,

2) for $1\leq k\leq n-1$,
\[
A_{k}^{(n,d)}=\begin{bmatrix}
A_{0}^{(n,d-1)}&\mathbf{k}\\
A_{1}^{(n,d-1)}&\mathbf{k-1}\\
\vdots&\vdots\\
A_{k}^{(n,d-1)}&\mathbf{0}
\end{bmatrix},
\]

3) for $n\leq k\leq nd-n$, 
\[
A_{k}^{(n,d)}=\begin{bmatrix}
A_{k-n}^{(n,d-1)}&\mathbf{n}\\
A_{k-n+1}^{(n,d-1)}&\mathbf{n-1}\\
\vdots&\vdots\\
A_{k}^{(n,d-1)}&\mathbf{0}
\end{bmatrix},
\]

4) for $nd-n+1\leq k\leq nd-1$,
\[A_{k}^{(n,d)}=
\begin{bmatrix}
A_{k-n}^{(n,d-1)}&\mathbf{n}\\
A_{k-n+1}^{(n,d-1)}&\mathbf{n-1}\\
\vdots&\vdots\\
A_{nd-n}^{(n,d-1)}&\mathbf{k+n-nd}
\end{bmatrix}, \text{ and }
\]	

5) $A_{nd}^{(n,d)}=\begin{bmatrix}n&n&\cdots&n\end{bmatrix}$.

For instance, 

\begin{align*}
A_{0}^{(2,3)}&=
\begin{bmatrix}
0&0&0
\end{bmatrix},\;\;
A^{(2,3)}_{1}=
\begin{bmatrix}
0&0&1\\
0&1&0\\
1&0&0
\end{bmatrix},\;\; 
A_{2}^{(2,3)}=
\begin{bmatrix}
0&0&2\\
0&1&1\\
1&0&1\\
0&2&0\\
1&1&0\\
2&0&0
\end{bmatrix},\;\;
A_{3}^{(2,3)}=
\begin{bmatrix}
0&1&2\\
1&0&2\\
0&2&1\\
1&1&1\\
2&0&1\\
1&2&0\\
2&1&0
\end{bmatrix},\\
A_{4}^{(2,3)}&=\begin{bmatrix}
0&2&2\\
1&1&2\\
2&0&2\\
1&2&1\\
2&1&1\\
2&2&0
\end{bmatrix},\;\;
A_{5}^{(2,3)}=
\begin{bmatrix}
1&2&2\\
2&1&2\\
2&2&1
\end{bmatrix},\text{ and}\;\;
A_{6}^{(2,3)}=
\begin{bmatrix}
2&2&2
\end{bmatrix}.
\end{align*}

Notice that the number of rows in $A_{k}^{(n,d)}$ is the number of terms of degree $k$ in the expansion of $(1+x+\cdots+x^{n})^{d}$.

The following theorem relates the order of the rows of $H_{n}^{d}$, from top to bottom,  to the Hales order on $P_{n}^{d}$.
\begin{thm}\label{thm:halesorder}
The rows of $H_{n}^{d}$ listed from top to bottom orders the vertices of  $P^{d}_{n}$ in increasing Hales order.
\end{thm}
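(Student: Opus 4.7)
My plan is to prove Theorem~\ref{thm:halesorder} by induction on $d$, showing simultaneously that (a) each block $A_{k}^{(n,d)}$ contains exactly the vertices of weight $k$ in $\{0,1,\ldots,n\}^{d}$, each appearing once, and (b) within $A_{k}^{(n,d)}$ the rows appear in decreasing lexicographic order read right-to-left. Combined with the block structure of $H_{n}^{d}$ in~(\ref{eq:H_{d}}), which by construction lists the blocks in increasing order of weight $k=0,1,\ldots,nd$, these two facts yield the two clauses of the definition of Hales order.

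The base case $d=1$ is immediate since $A_{i}^{(n,1)}=[i]$ is a single row of weight $i$. For the inductive step, fix $d\geq 2$ and $0\leq k\leq nd$. In each of the cases (1)--(5) that define $A_{k}^{(n,d)}$, the rightmost column is constant on sub-blocks and takes the values $n,n-1,\ldots$ from top to bottom over the range of permissible last coordinates $j$ (namely $\max(0,k-n)\leq j\leq \min(n,k)$); for each such $j$ the remaining $d-1$ columns form $A_{k-j}^{(n,d-1)}$. By the induction hypothesis, $A_{k-j}^{(n,d-1)}$ collects exactly the weight-$(k-j)$ vertices of $\{0,\ldots,n\}^{d-1}$, each row has weight $k-j$, and they are listed in decreasing right-to-left lex order on the first $d-1$ coordinates. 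Adjoining the last column (which has value $j$) bumps every row's total weight up to $k$, and every weight-$k$ vertex of $\{0,\ldots,n\}^{d}$ occurs exactly once as we vary $j$, giving claim (a).

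For claim (b), note that two rows $u,v$ in $A_{k}^{(n,d)}$ either have distinct last coordinates or the same last coordinate. In the first case, the row with the larger last coordinate sits higher in the block (since the last-column values $j$ decrease from top to bottom); this is precisely what right-to-left lex comparison demands, since the rightmost coordinate is compared first. In the second case, $u$ and $v$ lie in the same sub-block $A_{k-j}^{(n,d-1)}$ and their order there, by induction, is the decreasing right-to-left lex order on the first $d-1$ coordinates, which agrees with that on all $d$ coordinates once the last coordinates are equal. This finishes the inductive step.

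The main subtlety to handle carefully is the case analysis (1)--(5): cases (2) and (4) are truncations of the generic case (3) at the two ends (where the last coordinate cannot attain the full range $\{0,\ldots,n\}$ because there are not enough units of weight to distribute, or enough slack below $n$), and cases (1) and (5) are degenerate one-row blocks. A brief check confirms that in each case the set of values assumed by the last column is exactly $\{\max(0,k-n),\ldots,\min(n,k)\}$ listed in decreasing order, so the argument of the previous paragraph applies uniformly. Apart from this bookkeeping, no computational difficulty arises.
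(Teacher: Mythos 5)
Your proof is correct in structure and follows essentially the same route as the paper: induction on $d$, reducing the claim to showing that each block $A_{k}^{(n,d)}$ consists precisely of the weight-$k$ vertices listed in decreasing right-to-left lexicographic order (your case split on whether two rows share the last coordinate is a more careful version of the paper's one-line justification of this point). One bookkeeping slip: the permissible range of the last coordinate $j$ of a weight-$k$ vertex is $\max(0,\,k-n(d-1))\leq j\leq\min(n,k)$, not $\max(0,\,k-n)\leq j\leq\min(n,k)$; with your formula the generic case (3) (where $n\leq k\leq n(d-1)$ and the last column runs over all of $\{0,\ldots,n\}$) would appear to omit the values $0,\ldots,k-n-1$, e.g.\ $A_{4}^{(2,3)}$ has last-column values $\{0,1,2\}$ while your formula predicts only $\{2\}$. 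With that correction the case check matches the definition in each of (1)--(5) and the argument goes through.
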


\begin{proof}
We proceed by induction on $d$. As in the proof of~\cite[Theorem 1]{WWD09}, it is enough to show that the rows of $A_{k}^{(n,d)}$ are all distinct, have weight $k$ and are sorted, from top to bottom, in decreasing lexicographic order from right-to-left.

The case $d=1$ follows trivially. Let us assume that the result holds for all integers at most $d-1$. Notice that all the rows of $A_{k}^{(n,d)}$ are different from each other, since, by induction hypothesis, the rows of $H^{d-1}_{n}$ were. Furthermore, the weight of the rows of $A_{k}^{(n,d)}$ is $k$ since the rows of $A_{m}^{(n,d-1)}$ have weight $m$. Finally, notice that the vectors of $A_{k}^{(n,d)}$ are listed, from top to bottom, in decreasing lexicographic order from right-to-left, as they share the leftmost $d-1$ components with vectors in $H^{d-1}_{n}$.
\end{proof}

By abuse of notation, we denote the labeling given by the Hales order on $P_{n}^{d}$ by $H_{n}^{d}$.  In particular, $H_{2}^{2}(11)=5$. The Hales order gives the bandwidth of $P_{n}^{d}$, that is,

\begin{prop}[Corollary 3.3, \cite{M05}] For $d\geq0$,
$bw(P_{n}^{d})=bw(H_{n}^{d})$.
\end{prop}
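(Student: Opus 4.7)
The plan is to sandwich $bw(P_n^d)$ between two quantities that both equal $bw(H_n^d)$, via a vertex-isoperimetric argument in the spirit of Harper~\cite{H66} and Moghadam~\cite{M05}. The upper bound $bw(P_n^d) \le bw(H_n^d)$ is automatic since $bw(G)$ is the minimum over all labelings, so the content is the matching lower bound. The first ingredient is the standard bridge between bandwidth and vertex boundaries: for any labeling $f$ of a graph $G$, any $k$, and $S_k = f^{-1}(\{1,\ldots,k\})$, the outer vertex boundary $\partial A = \{v \notin A : v \sim u \text{ for some } u \in A\}$ of $S_k$ carries labels in $\{k+1,\ldots,k+bw(f)\}$, so $|\partial S_k| \le bw(f)$. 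Taking the minimum over sets of size $k$ and the maximum over $k$ yields
\[
bw(G) \ge \max_k \mu_G(k), \qquad \mu_G(k) = \min_{|A|=k} |\partial A|.
\]

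Second, I would invoke the Harper-type vertex-isoperimetric theorem for products of paths proved in~\cite{M05}: the initial segments $B_k$ of Hales order realize the vertex isoperimetric profile, i.e., $|\partial B_k| = \mu_{P_n^d}(k)$. The standard route is a compression argument that transforms an arbitrary set $A$ of size $k$ into $B_k$ by a sequence of elementary moves---weight-preserving reverse-lex shifts and coordinate-wise slides along the path factors---checking at each step that $|\partial A|$ does not grow. This compression step is the technical heart of the argument and the main obstacle.

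Third, I would verify $bw(H_n^d) = \max_k |\partial B_k|$. The $\ge$ direction is the isoperimetric bridge applied to $f = H_n^d$. For $\le$, the key structural observation is that in the Hales order the $|\partial B_k|$ vertices listed immediately after $B_k$ are precisely the members of $\partial B_k$. This follows from the weight-graded description recalled in Section~\ref{sec:P_n^d}: the neighbors of $B_k$ outside $B_k$ occupy at most two consecutive weight levels $w$ and $w+1$; every leftover weight-$w$ vertex lies in $\partial B_k$ because the entire weight-$(w-1)$ level is already inside $B_k$; and the Hales enumeration exhausts these leftover weight-$w$ vertices before passing to the weight-$(w+1)$ neighbors in the right-to-left reverse lex order. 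Granted this, any edge $(u,v)$ with $H_n^d(u) = i < j = H_n^d(v)$ forces $v \in \partial B_i$, and since the labels on $\partial B_i$ form the contiguous block $\{i+1,\ldots,i+|\partial B_i|\}$, we conclude $j - i \le |\partial B_i| \le \max_k |\partial B_k|$.

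Chaining the pieces yields $bw(P_n^d) \ge \max_k \mu_{P_n^d}(k) = \max_k |\partial B_k| = bw(H_n^d)$, which combined with the trivial inequality forces equality. The structural verification in step three reduces to a routine case analysis on the recursion~(\ref{eq:H_{d}}); the genuine content lies in step two, the compression theorem of~\cite{M05}.
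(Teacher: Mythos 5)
The paper does not actually prove this proposition: it is quoted as Corollary 3.3 of Moghadam~\cite{M05}, so there is no internal proof to compare against. Your outline is the standard Harper-style derivation of bandwidth optimality from vertex-isoperimetric optimality, and it is logically sound: the bridge $bw(G)\ge\max_k\mu_G(k)$ is correct, and the chain $bw(P_n^d)\ge\max_k\mu_{P_n^d}(k)=\max_k|\partial B_k|=bw(H_n^d)\ge bw(P_n^d)$ closes properly once its two inputs are granted. You correctly identify the compression theorem of~\cite{M05} as the technical heart, and since that is exactly the content of the cited result, deferring to it is consistent with what the paper itself does.

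One caution about your step three. The assertion that $\partial B_k$ occupies precisely the positions $k+1,\dots,k+|\partial B_k|$ in the Hales order is not a ``routine case analysis on the recursion.'' The leftover weight-$w$ vertices are handled as you say, but the weight-$(w+1)$ part of the boundary is the upper shadow of an initial segment of a weight level, and the claim that this shadow is again an initial segment in right-to-left reverse lex order is a genuine normalization lemma of Kruskal--Katona type; it is established by the same compression machinery as the isoperimetric theorem itself and should be attributed to~\cite{M05} (or to Harper's framework in~\cite{H04}) rather than treated as an afterthought. This matters because nested isoperimetric optimality alone does not imply bandwidth optimality for an arbitrary order; the contiguity of the boundary in the order is exactly the extra property that makes the implication go through, so it deserves the same citation discipline as the compression step.
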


The matrix $H_{n}^{d}$ allows us to find $bw(P_{n}^{d})$ by counting the maximum number of rows between two vertices of $P_{n}^{d}$. So we let $R(A_{k}^{(n,d)})$ denote the number of rows of $A_{k}^{(n,d)}$. In particular,  notice that  $R(A_{k}^{(n,d)})$ is given by the coefficient of the term of degree $k$ in the expansion $(1+x+x^{2}+\cdots + x^{n})^{d}$. That is,
\begin{equation}\label{coeff}
\begin{split}
R(A_{k}^{(n,d)}) &= [x^{k}](1+x+x^{2}+\cdots + x^{n})^{d} \\
    &= \sum_{k_{0},k_{1},\dots,k_{n}} \binom{d}{k_{0}, k_{1}, \dots, k_{n}}
    \end{split}
\end{equation}
where the sum is over $k_{0},k_{1},\dots,k_{n}$ satisfying $\sum k_{i} = d, \sum i k_{i} =k$.
Let $R_{n,d}$ be the array containing the $nd+1$ coefficients of $(1+x+x^{2}+\cdots + x^{n})^{d}$ sorted in decreasing order, that is, $R_{n,d}[1]\geq R_{n,d}[2]\geq\cdots \geq R_{n,d}[nd+1]$ 
and let
\[
\R(n,d)\stackrel{\text{def}}{=}\sum_{i=1}^{n}R_{n,d}[i],
\]
that is, $\R(n,d)$ is the sum of the $n$ largest coefficients of $(1+x+\cdots+x^{n})^{d}$.

We now present the main result of this note.

\begin{thm}\label{thm:main} For $d\geq1$,
\[
bw(H_{n}^{d})=\sum_{i=0}^{d-1}\R(n,i).
\]
\end{thm}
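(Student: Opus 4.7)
The plan is to prove the formula by induction on $d$, reducing to the recursion
\[ bw(H_n^d) = bw(H_n^{d-1}) + \mathcal{R}(n, d-1). \]
The base case $d = 1$ is immediate: $P_n^1$ is a path of bandwidth $1$, and $\mathcal{R}(n, 0) = 1$ (interpreting the single-entry array $R_{n,0}$ as padded with zeros).

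For the inductive step, I classify the edges of $P_n^d$ and derive an explicit formula for the label difference. Any edge $(u, v)$ with $H_n^d(v) > H_n^d(u)$ satisfies $w(v) = w(u) + 1$; splitting coordinates via the last entry as $u = (u'; j)$ and $v = (v'; j')$, one has either type (a), where $j = j'$, $u' \sim v'$ in $P_n^{d-1}$, and $w(v') = w(u') + 1$, or type (b), where $u' = v'$ and $j' = j + 1$. Setting $m = w(u')$ and $k = m+j$, the recursive block structure of $A_k^{(n,d)}$ says that the sub-block with last coordinate $\ell$ is a copy of $A_{k-\ell}^{(n,d-1)}$ with rows in the same order. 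Tracking the positions of $u$ and $v$ across sub-blocks, using $R(A_k^{(n,d)}) = \sum_\ell R(A_{k-\ell}^{(n,d-1)})$, and applying the tautology $H_n^{d-1}(v') - H_n^{d-1}(u') = R(A_m^{(n,d-1)}) + p_{m+1}^{d-1}(v') - p_m^{d-1}(u')$ (with $p$ denoting row position) to absorb a residual $R(A_m^{(n,d-1)})$ in the type (a) computation, I will arrive at
\[
H_n^d(v) - H_n^d(u) = \sum_{t = \max(0,\, m+j+1-n)}^{\min(m+j,\, n(d-1))} R(A_t^{(n,d-1)}) + \Delta,
\]
where $\Delta = H_n^{d-1}(v') - H_n^{d-1}(u')$ in type (a) and $\Delta = 0$ in type (b). The coefficient sum consists of at most $n$ consecutive coefficients of $(1+x+\cdots+x^n)^{d-1}$ and thus is at most $\mathcal{R}(n, d-1)$; together with the induction hypothesis $|\Delta| \leq bw(H_n^{d-1})$ this yields the upper bound $bw(H_n^d) \leq \mathcal{R}(n, d-1) + bw(H_n^{d-1})$.

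The matching lower bound calls for a type (a) edge that saturates both terms simultaneously. Using the induction hypothesis, I fix an edge $(u', v')$ in $P_n^{d-1}$ with $H_n^{d-1}(v') - H_n^{d-1}(u') = bw(H_n^{d-1})$, let $m = w(u')$, and choose $j \in \{0, \ldots, n\}$ so that $\sum_{t=m+j+1-n}^{m+j} R(A_t^{(n,d-1)}) = \mathcal{R}(n, d-1)$. If $[\alpha, \alpha + n - 1]$ is an optimal window achieving this maximum, one needs $j = \alpha + n - 1 - m$, which is admissible precisely when $m \in [\alpha - 1, \alpha + n - 1]$. The main obstacle is arranging the inductive construction so this admissibility holds at every stage; I will handle it by strengthening the induction hypothesis with an explicit invariant on the weight $m$ of the chosen edge's lower endpoint, exploiting that the coefficient sequence of $(1+x+\cdots+x^n)^{d-1}$ is palindromic and unimodal so that maximal windows come in mirror-image pairs around the middle index $n(d-1)/2$. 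Because the center of an optimal window shifts by about $n/2$ when the exponent increases by $1$ while the admissible interval for $m$ has width $n+1$, the invariant propagates and the induction closes.
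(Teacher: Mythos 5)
Your proposal follows essentially the same route as the paper: induction on $d$ via the recursion $bw(H_n^d) = bw(H_n^{d-1}) + \mathcal{R}(n,d-1)$, classifying edges of $P_n^d$ by whether the last coordinate changes, and expressing the label difference as a window of $n$ consecutive coefficients of $(1+x+\cdots+x^n)^{d-1}$ plus the $(d-1)$-dimensional label difference. Your treatment of the tightness step is in fact more scrupulous than the paper's, which simply places the extremal edge of $H_n^{d-1}$ at weights $\lfloor n(d-1)/2\rfloor$ and $\lfloor n(d-1)/2\rfloor+1$ without verifying that the induction supplies one there; the invariant you propose on the weight of the extremal edge's lower endpoint is exactly what is needed to make that assertion rigorous.
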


\begin{proof}

Let $(u,v)\in E(P_{n}^{d})$ and assume $w(v) \ge w(u)$ so $w(v)-w(u)=1$, that is,  $u\in A_{k}^{(n,d)}$ and $v\in A_{k+1}^{(n,d)}$ for some $k$. Then either (i) $u$ and $v$ do not share the $d^{th}$ coordinate or (ii) they do.

Case (i) turns out not to be relevant in the optimization problem. Let $u\in[A^{(n,d-1)}_{k-b}\;\;\mathbf{b}]$ and $v\in[A^{(n,d-1)}_{k-b}\;\;\mathbf{b+1}]$, $0\leq b<n$. If $x_{1:m}$ denotes the first $m$ components of a vector, then $u_{1:d-1}=v_{1:d-1}$. Thus there exists $v'\in [A^{(n,d-1)}_{k-b+1}\;\;\textbf{b}]$ satisfying $H_{n}^{d}(v)-H_{n}^{d}(u)<H_{n}^{d}(v')-H_{n}^{d}(u)$ and $(u,v')\in P_{n}^{d}$.

So let us assume that $u\in [A_{k-b}^{(n,d-1)}\;\;\textbf{b}]$ and $v\in[A_{k-b+1}^{(n,d-1)}\;\;\textbf{b}]$, $0\leq b\leq n$. By Theorem~\ref{thm:halesorder}, we have
\[
H_{n}^{d}(v)-H_{n}^{d}(u)=H_{n}^{d-1}(v_{1:d-1})-H_{n}^{d-1}(u_{1:d-1})+R_{n}^{d}(u,v,b),
\]
where $R_{n}^{d}(u,v,b)$ is the number of rows between $u$ and $v$ in $H_{n}^{d}$ that do not have $b$ as last component. Note that each row counted by $R_{n}^{d}(u,v,b)$ is in a matrix $[A_{k-h}^{(n,d-1)}\;\;\textbf{h}]$ with $h\in\{0,1,\ldots,n\}\setminus\{b\}$. There are $n$ such matrices, and since the number of rows of $A_{k-h}^{(n,d-1)}$ is $[x^{k-h}](1+x+\cdots+x^{n})^{d-1}$, it follows that
\[
bw(H_{n}^{d})\leq bw(H_{n}^{d-1})+\R(n,d-1).
\]

To verify that the inequality is tight, set $b=\lfloor n/2\rfloor$, $c=\lfloor n(d-1)/2\rfloor$ and let $u\in [A^{n,d-1}_{c}\;\;\textbf{b}]$ and $v\in [A^{n,d-1}_{c+1}\;\;\textbf{b}]$, with $H_{n}^{d-1}(v_{1:d-1})-H_{n}^{d-1}(u_{1:d-1})=bw(H_{n}^{d-1})$. Notice that the number of rows in $H_{n}^{d}$ between $u$ and $v$ that have $b$ as last component is $bw(H_{n}^{d-1})$. Furthermore, by choice of $c$, the rows counted by $R_{n}^{d}(u,v,b)$ correspond to the first $n$ elements of the array $R_{n,d-1}$, for they correspond to the ``middle'' $n$ coefficients of the expansion of $(1+x+\cdots+x^{n})^{d-1}$ (see Remark~\ref{rem:peak}), that is, $R_{n}^{d}(u,v,b)=\R(n,d-1)$.
\end{proof}

\begin{rem}

If $n=1$, then $\displaystyle\R(n,i)=\binom{i}{\lfloor\frac{i}{2}\rfloor}$ and one recovers the result of Harper~\cite[Corollary 4.4]{H04}. Furthermore, $bw(H_{n}^{2})=\R(n,0)+\R(n,1)=1+n$, and one obtains a special case of Chv\'atalov\'a's~\cite[Theorem 1.1]{C75} stating that $bw(P^{1}_{n}\times P^{1}_{m})=\min\{n+1,m+1\}$.
\end{rem}

\subsection{The case $n=2$}

One can compute the bandwidth of $P_{2}^{d}$ utilizing the following lemma, whose proof follows directly from \eqref{coeff}.

\begin{lem}
$R(A_{k}^{(2,d)})=\sum_{\ell=0}^{\lfloor k/2\rfloor}\binom{d}{d-k+\ell,k-2\ell,\ell}.$
\end{lem}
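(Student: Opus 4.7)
The plan is to apply the multinomial formula from equation (\ref{coeff}) directly with $n=2$ and reparametrize the index set. By (\ref{coeff}),
\[
R(A_{k}^{(2,d)})=\sum_{k_{0},k_{1},k_{2}} \binom{d}{k_{0},k_{1},k_{2}},
\]
where the sum runs over nonnegative integers satisfying $k_{0}+k_{1}+k_{2}=d$ and $k_{1}+2k_{2}=k$.

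The key step is to eliminate two of the three indices. Setting $\ell:=k_{2}$ solves the system: from $k_{1}+2\ell=k$ we get $k_{1}=k-2\ell$, and from $k_{0}+k_{1}+\ell=d$ we get $k_{0}=d-k+\ell$. So the triple $(k_{0},k_{1},k_{2})=(d-k+\ell,\,k-2\ell,\,\ell)$ is completely determined by $\ell$.

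Next I would determine the range of $\ell$. Nonnegativity of $k_{2}$ gives $\ell\geq 0$, and nonnegativity of $k_{1}=k-2\ell$ gives $\ell\leq\lfloor k/2\rfloor$. The remaining condition $k_{0}=d-k+\ell\geq 0$ need not be explicitly imposed in the index set: either one includes it (with the understanding that $\binom{d}{d-k+\ell,k-2\ell,\ell}$ vanishes when $d-k+\ell<0$, using the standard convention that multinomial coefficients with a negative entry are $0$), or one simply notes that the summands in question contribute zero. Substituting the reparametrization yields
\[
R(A_{k}^{(2,d)})=\sum_{\ell=0}^{\lfloor k/2\rfloor}\binom{d}{d-k+\ell,\,k-2\ell,\,\ell},
\]
as claimed. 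There is no real obstacle here; the lemma is a direct bookkeeping consequence of (\ref{coeff}) for $n=2$, and the only care needed is the convention handling the $d-k+\ell\geq 0$ constraint.
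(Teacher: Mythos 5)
Your proof is correct and is exactly the argument the paper intends: the paper states only that the lemma ``follows directly from \eqref{coeff}'', and your reparametrization by $\ell=k_{2}$, with the range $0\leq\ell\leq\lfloor k/2\rfloor$ and the vanishing convention for $d-k+\ell<0$, is the straightforward way to carry that out.
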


Table~\ref{tab:values}  exhibits the value of $bw(P^{d}_{n})$ for $0\leq n\leq 8$ and $1\leq d\leq 11$.

\begin{table}[htdp]
\caption{$bw(P^{d}_{n})$ for $1\leq d\leq 11$, $1\leq n\leq 8$}
\begin{center}
\begin{tabular}{|c|c|c|c|c|c|c|c|c|c|c|c|c|c}
\hline
$d$/$n$&1&2&3&4&5&6&7&8\\
\hline 
1&1&1&1&1&1&1&1&1\\
2&2&3&4&5&6&7&8&9\\
3&3&8&14&21&30&40&52&65\\
4&6&21&48&91&155&243&360&509\\
5&12&56&172&404&831&1514&2574&4085\\
6&22&152&617&1835&4512&9655&18716&33551\\
7&42&419&2289&8464&25098&62474&138816&279441\\
8&77&1169&8463&39489&140059&408667&1035692&2352135\\
9&147&3292&32011&185814&793765&2695090&7823236&19956152\\
10&273&9338&120439&880174&4499506&17887694&59241709&170376339\\
11&525&26641&460813&4191494&25788102&119335481&452484637&1461956288\\
\hline
\end{tabular}
\end{center}
\label{tab:values}
\end{table}%

\section{Bounds involving multinomial coefficients} 

In this section we show that $bw(P_{n}^{d})$ is bounded below and above by multinomial coefficients. We first discuss some combinatorial background. 

\subsection{Extended Pascal's triangles} Let $C_{n}(d,k)$ denote the coefficient of $x^{k}$ in the expansion of $(1+x+\cdots+x^{n})^{d}$.

The array
\begin{align*}
&C_{n}(0,0)\\
&C_{n}(1,0)\quad C_{n}(1,1)\quad\cdots\quad C_{n}(1,n)\\
&C_{n}(2,0)\quad C_{n}(2,1)\quad\cdots\quad C_{n}(2,2n)\\
&\hspace{.8in}\quad\vdots\\
&C_{n}(d,0)\quad C_{n}(d,1)\quad\cdots\quad C_{n}(d,dn).
\end{align*}
is called an \emph{extended Pascal's triangle} (or \emph{generalized} Pascal's triangle), and we denote it by $T^{d}_{n}$. That is, $T_{n}^{d}$ is the array where the element in the $(\ell+1)^{st}$ row and $(k+1)^{st}$ column is $C_{n}(\ell,k)$, with $0\leq\ell\leq d$ and $0\leq k\leq \ell n$. These extended Pascal's triangles satisfy  properties akin to those of Pascal's triangle. We list the properties that will be utilized in the sequel.

\begin{lem}\label{lem:multinomial} Let $d,n,k$ be integers with $0\leq k\leq d$. Then,
\begin{enumerate}
\item[(i)] $\displaystyle C_{n}(d,k)=\sum_{j=0}^{n}C_{n}(d-1,k-j)$, where $C_{n}(a,b)=0$ if $b<0$.   \emph{(See~\cite[Equation (2)]{B93}.)}\\
\item[(ii)] $C_{n}(d,k)=C_{n}(d,nd-k)$, that is, each row of $T_{n}^{d}$ is symmetric. \emph{(See~\cite[Equation (4)]{B93}.)}\\
\item[(iii)] The sequence $\{C_{n}(d,k)\}_{k=0}^{nd}$ is log-concave. In particular, each row of $T_{n}^{d}$ is unimodal. \emph{(See~\cite[Theorem 7, Corollary 8]{BS08}.)}
\end{enumerate}
\end{lem}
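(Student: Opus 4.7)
The plan is to treat the three parts separately, since they are logically independent and each follows by a standard technique; the paper cites \cite{B93} for (i) and (ii) and \cite{BS08} for (iii), but short self-contained arguments are available for all three. The main obstacle is part~(iii), while (i) and (ii) are routine generating-function manipulations.

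For part~(i), I would factor the defining generating function as
\[
(1+x+\cdots+x^{n})^{d}=(1+x+\cdots+x^{n})^{d-1}\cdot(1+x+\cdots+x^{n})
\]
and equate the coefficient of $x^{k}$ on both sides. The right-hand side is a convolution, contributing $\sum_{j=0}^{n}C_{n}(d-1,k-j)$, while the left-hand side contributes $C_{n}(d,k)$. The boundary convention $C_{n}(a,b)=0$ for $b<0$ simply absorbs the terms for which $k-j$ falls below zero. For part~(ii), I would apply the substitution $x\mapsto 1/x$ together with multiplication by $x^{nd}$: since $1+x+\cdots+x^{n}=x^{n}(1+x^{-1}+\cdots+x^{-n})$, raising to the $d$-th power gives
\[
(1+x+\cdots+x^{n})^{d}=x^{nd}(1+x^{-1}+\cdots+x^{-n})^{d}=\sum_{k=0}^{nd}C_{n}(d,k)\,x^{nd-k}.
\]
Matching with the original expansion yields $C_{n}(d,k)=C_{n}(d,nd-k)$.

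For part~(iii), the cleanest plan is an induction on $d$ using the recurrence from~(i). At $d=1$ the coefficient sequence is $(1,1,\ldots,1)$ of length $n+1$, which is log-concave with no internal zeros. For the inductive step, part~(i) expresses $\{C_{n}(d,k)\}_{k}$ as the convolution of $\{C_{n}(d-1,k)\}_{k}$ with the constant sequence $(1,\ldots,1)$ of length $n+1$. I would then invoke the classical theorem (Keilson--Gerber, or see the references surveyed in~\cite{BS08}) that the convolution of two nonnegative log-concave sequences without internal zeros is again log-concave and has no internal zeros. Unimodality of each row of $T_{n}^{d}$ then follows immediately, since any nonnegative log-concave sequence with no internal zeros is unimodal. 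The main obstacle is precisely this convolution-preservation fact, which is the non-trivial ingredient; once it is granted, the rest of (iii) is bookkeeping. If one prefers a purely combinatorial route, an injection-based proof of log-concavity for these multinomial-type coefficients can be built along the lines of the lattice-path arguments in~\cite{BS08}, but the convolution approach is more economical.
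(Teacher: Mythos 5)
The paper offers no proof of this lemma at all: it simply cites Bollinger \cite{B93} for parts (i) and (ii) and Belbachir--Szalay \cite{BS08} for part (iii), so your self-contained arguments are necessarily a different route. All three of your arguments are correct. Parts (i) and (ii) are exactly the standard generating-function manipulations one would expect (convolution for the extended Pascal identity, the substitution $x\mapsto 1/x$ followed by multiplication by $x^{nd}$ for the palindromic symmetry), and they match what the cited source does in spirit. For part (iii) you replace the citation to \cite{BS08} by an induction on $d$ whose engine is the classical theorem (Karlin/Hoggar/Keilson--Gerber) that the convolution of two nonnegative log-concave sequences with no internal zeros is again log-concave with no internal zeros; you correctly carry the ``no internal zeros'' hypothesis through the induction, which is essential since log-concavity alone is not preserved under convolution. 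This buys a short, modular proof at the cost of importing one nontrivial external theorem; the approach in \cite{BS08} instead establishes log-concavity of rays in generalized Pascal triangles directly, which is more work but self-contained at a lower level. Your deduction of unimodality from log-concavity plus nonnegativity and no internal zeros is also correct. The only caveat worth flagging is cosmetic: the hypothesis ``$0\leq k\leq d$'' in the statement should presumably read ``$0\leq k\leq nd$'' for parts (ii) and (iii) to be meaningful over the whole row, but that is an issue with the paper's statement, not with your proof.
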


We refer to Lemma~\ref{lem:multinomial}(i) as the \emph{extended Pascal's identity}. Some remarks are in order.

\begin{rem}\label{rem:peak} Combining the second and third part of the lemma, we conclude that the sequence  $\{C_{n}(d,k)\}_{k=0}^{d}$ is unimodal and has a peak at $k=\lfloor (nd+1)/2\rfloor$; furthermore, just as in the case $n=1$, this sequence can have at most two peaks. The lemma also implies that the largest $i$ elements of each row of $T_{n}^{d}$ are the ``middle'' $i$ elements in each row, that is, the elements located in positions $j$, where  
\[\begin{cases}
\lfloor (nd+1)/2\rfloor-\lfloor i/2\rfloor\leq j\leq \lfloor{(nd+1)/2}\rfloor+\lfloor i/2\rfloor&\text{ if $d$ is even and $i$ is odd,}\\
\lfloor (nd+1)/2\rfloor- i/2\leq j\leq \lfloor{(nd+1)/2}\rfloor+ i/2-1&\text{ if $d$ is even and $i$ is even,}\\
\lfloor (nd+1)/2\rfloor-\lfloor i/2\rfloor\leq j\leq \lfloor{(nd+1)/2}\rfloor+\lfloor i/2\rfloor&\text{ if $d$ is odd and $i$ is odd, and}\\
\lfloor (nd+1)/2\rfloor- i/2\leq j\leq \lfloor{(nd+1)/2}\rfloor+ i/2-1&\text{ if $d$ is odd and $i$ is even.}\\
\end{cases}
\]
\end{rem}

\subsection{The bounds}

Using the notation of Section~\ref{subsec:hales}, we  write $R(A_{k}^{(n,d)})$ instead of $C_{n}(d,k)$. We furthermore denote the highest coefficient in the expansion of $(1+x+\cdots+x^{n})^{d}$ by $M(n,d)$. 
From Remark~\ref{rem:peak}, the first $i$ elements of $R_{n,d}$ are those that correspond to the ``middle'' $i$ degrees of the expansion of $(1+x+\cdots+x^{n})^{d}$. Therefore, the extended Pascal's identity yields
\begin{equation}\label{eq:central}
M(n,d)= \sum_{i=1}^{n+1}R_{n,d-1}[i]=\R(n,d-1)+R_{n,d-1}[n+1].
\end{equation}

Now we prove that $bw(P_{n}^{d})$ is bounded below and above by central multinomial coefficients. The key element in the proof is Equation~(\ref{eq:central}).

\begin{lem}\label{lem:upperbound}
For $n,d\geq0$, $M(n,d)\leq bw(P_{n}^{d})\leq M(n,d+1)$.
\end{lem}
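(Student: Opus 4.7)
The plan is to prove both inequalities simultaneously by induction on $d$, with base case $d=1$ where $bw(H_n^1)=1=M(n,1)$ and $M(n,2)=n+1\geq 1$. The inductive engine combines two identities: Theorem~\ref{thm:main} gives $bw(H_n^d)=bw(H_n^{d-1})+\R(n,d-1)$, and equation~(\ref{eq:central}) says $M(n,d)=\R(n,d-1)+R_{n,d-1}[n+1]$.

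For the lower bound, subtracting the two identities above yields $bw(H_n^d)-M(n,d)=bw(H_n^{d-1})-R_{n,d-1}[n+1]$. Since $R_{n,d-1}[n+1]\leq R_{n,d-1}[1]=M(n,d-1)$ and, by the inductive hypothesis, $M(n,d-1)\leq bw(H_n^{d-1})$, this difference is non-negative, closing the induction.

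For the upper bound, the inductive hypothesis $bw(H_n^{d-1})\leq M(n,d)$ combined with the telescoping formula reduces the problem to the purely numerical inequality $M(n,d+1)\geq M(n,d)+\R(n,d-1)$. I plan to prove the stronger statement
\[M(n,d+1)\geq M(n,d)+n\cdot M(n,d-1),\]
from which the needed inequality follows immediately, since $\R(n,d-1)$ is a sum of $n$ coefficients of row $d-1$, each at most $M(n,d-1)$. For the stronger bound, let $P_d=\lfloor nd/2\rfloor$, which is a peak of row $d$ by Lemma~\ref{lem:multinomial}(ii)--(iii), and apply the extended Pascal identity to write $M(n,d+1)=\sum_{j=0}^n C_n(d,P_{d+1}-j)$. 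A short parity check on $n$ and $d$ yields $P_{d+1}-P_{d-1}=n$ and $0\leq P_{d+1}-P_d\leq n$, so the $n+1$ indices $P_{d+1}-j$ are precisely the positions $P_{d-1},P_{d-1}+1,\ldots,P_{d-1}+n$, and they include $P_d$. One summand is thus $C_n(d,P_d)=M(n,d)$; for each remaining position $k\in[P_{d-1},P_{d-1}+n]$, the expansion $C_n(d,k)=\sum_{i=0}^n C_n(d-1,k-i)$ includes the peak term $C_n(d-1,P_{d-1})=M(n,d-1)$ as one of its summands, so $C_n(d,k)\geq M(n,d-1)$. Summing the $n+1$ terms gives the claim.

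The only step requiring any real care is the positional arithmetic confirming $P_{d+1}-P_{d-1}=n$ and $P_d\in[P_{d-1},P_{d-1}+n]$, which splits into a handful of cases on the parities of $n$ and $d$; everything else is either the inductive hypothesis or a direct comparison of non-negative terms, so the main obstacle is really just this bookkeeping.
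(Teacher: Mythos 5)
Your proof is correct, and the lower-bound half is essentially identical to the paper's (the paper writes $M(n,d)=\R(n,d-1)+R_{n,d-1}[n+1]\leq \R(n,d-1)+M(n,d-1)\leq \R(n,d-1)+bw(P_n^{d-1})=bw(P_n^d)$, which is your subtraction argument read in the other direction). The upper bound is where you genuinely diverge. The paper keeps the sharp quantity $\R(n,d-1)$ and closes the induction via the inequality $\sum_{i=1}^{n}R_{n,d-1}[i]\leq\sum_{i=2}^{n+1}R_{n,d}[i]$, justified only by the one-line remark that entries of $R_{n,d}$ are sums of entries of $R_{n,d-1}$ — a step that actually takes some care to make rigorous. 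You instead replace $\R(n,d-1)$ by the cruder bound $n\cdot M(n,d-1)$ and prove the explicit inequality $M(n,d+1)\geq M(n,d)+n\cdot M(n,d-1)$ by expanding the peak of row $d+1$ via the extended Pascal identity and locating the peaks of rows $d$ and $d-1$ inside the resulting window. This costs you sharpness in the intermediate estimate (which is harmless, since both routes land on $M(n,d+1)$ exactly) but buys a completely self-contained verification of the key step. One small simplification: your ``positional arithmetic'' needs no case analysis on parities, since $n(d+1)/2$ and $n(d-1)/2$ differ by the integer $n$, so $P_{d+1}-P_{d-1}=\lfloor n(d+1)/2\rfloor-\lfloor n(d-1)/2\rfloor=n$ automatically, and $P_{d+1}-P_{d}\in\{\lfloor n/2\rfloor,\lceil n/2\rceil\}\subseteq[0,n]$ likewise falls out directly.
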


\begin{proof}
We prove both inequalities by induction. The base case $d=1$ is easily verified to hold. Let us suppose that $M(n,d-1)\leq bw(P_{n}^{d-1})\leq M(n,d)$. Then,
\begin{align*}
M(n,d)&
=\R(n,d-1)+R_{n,d-1}[n+1]\\
&\leq \R(n,d-1)+M(n,d-1)\\
&\leq \R(n,d-1)+bw(P_{n}^{d-1})\\
&= bw(P_{n}^{d}).
\end{align*}

For the upper bound, notice that
\begin{align*}
bw(P_{n}^{d})&=bw(P_{n}^{d-1})+\R(n,d-1)\\
&\leq M(n,d)+\R(n,d-1)\\
&= M(n,d)+\sum_{i=1}^{n}R_{n,d-1}[i]\\
&\leq M(n,d)+\sum_{i=2}^{n+1}R_{n,d}[i]\\
&= M(n,d+1),
\end{align*} as desired. We used the fact that $\sum_{i=1}^{n}R_{n,d-1}[i]\leq\sum_{i=2}^{n+1}R_{n,d}[i]$, for the elements of $R_{n,d}$ are sums of elements of $R_{n,d-1}$ by the extended Pascal's identity.
\end{proof}

\subsection{Comparison with the lex order}\label{sec:lex}

Our interest in the bandwidth problem started when a colleague, John Hubbard, asked whether the lexicographic order provided an asymptotically good approximation to the bandwidth for the discrete Laplacian. We point out that one can obtain information regarding the bandwidth of a graph via the discrete Laplacian. For example the Laplacian spectrum of a graph $G$ can be used to give a lower bound on the bandwidth of $G$ (see, e.g.,~\cite{BS208,JM93}). In this section,  we compare the asymptotic behavior of the lexicographic order (from left-to-right) and the Hales order. 

Let $L_{n}^{d}$ denote the label given by ordering the vertices of $P_{n}^{d}$ lexicographically (from left-to-right). Then it can be proved inductively that
\[
bw(L_{n}^{d})=(n+1)^{d-1}.
\] 

For the rest of the section, we consider $n$ to be fixed. We will show the following proposition.\begin{prop}\label{thm:smallo}
$bw(H_{n}^{d})$ is $o(bw(L_{n}^{d}))$ as $d\to\infty$.
\end{prop}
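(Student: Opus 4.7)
The plan is to reduce the asymptotic comparison to an estimate of the central multinomial coefficient $M(n,d)$ and then invoke the local central limit theorem. First, by Lemma~\ref{lem:upperbound} combined with the identity $bw(L_n^d)=(n+1)^{d-1}$ given just above the statement, one has
\[
\frac{bw(H_n^d)}{bw(L_n^d)}\le \frac{M(n,d+1)}{(n+1)^{d-1}}=(n+1)^2\cdot\frac{M(n,d+1)}{(n+1)^{d+1}},
\]
so (since $n$ is fixed throughout the section) it suffices to prove that $M(n,d)/(n+1)^d\to 0$ as $d\to\infty$.

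Next, I would translate this to a probabilistic statement. Let $X_1,X_2,\dots$ be i.i.d.\ uniform random variables on $\{0,1,\dots,n\}$, and set $S_d=X_1+\cdots+X_d$. The probability generating function of $X_1$ is $(1+x+\cdots+x^n)/(n+1)$, so
\[
P(S_d=k)=\frac{C_n(d,k)}{(n+1)^d},\qquad\text{and hence}\qquad \frac{M(n,d)}{(n+1)^d}=\max_{k}P(S_d=k).
\]
Direct computation shows that the increments have mean $n/2$, finite positive variance $\sigma^2=n(n+2)/12$, and support generating the full lattice $\mathbb{Z}$, so the walk is aperiodic of span $1$.

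Finally, I would apply the classical local central limit theorem for aperiodic sums of i.i.d.\ integer random variables with finite positive variance, which yields
\[
\max_{k}P(S_d=k)\sim\frac{1}{\sqrt{2\pi\sigma^2 d}}\quad\text{as }d\to\infty.
\]
Substituting, one gets $M(n,d+1)/(n+1)^{d-1}=O((n+1)^2/\sqrt{d+1})\to 0$, which is the conclusion of the proposition.

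The only substantive step is the $O((n+1)^d/\sqrt{d})$ bound on $M(n,d)$. If a fully self-contained argument is preferred over citing the local CLT, this bound can be obtained by a Laplace-type estimate of the Fourier integral
\[
C_n(d,k)=\frac{1}{2\pi}\int_{-\pi}^{\pi}\left(\frac{\sin((n+1)\theta/2)}{\sin(\theta/2)}\right)^{\!d}e^{-ik\theta}\,d\theta,
\]
using the expansion $|\sin((n+1)\theta/2)/\sin(\theta/2)|=(n+1)(1-n(n+2)\theta^2/24+O(\theta^4))$ near $\theta=0$ together with a uniform bound $|\sin((n+1)\theta/2)/\sin(\theta/2)|\le(n+1)-\delta(\varepsilon)$ on $|\theta|\ge\varepsilon$ (which holds because the only place where the modulus hits $n+1$ is $\theta=0$). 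This routine Gaussian integration is the one place where a little analytic work is needed; everything else is bookkeeping.
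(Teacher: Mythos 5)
Your proposal is correct and follows essentially the same route as the paper: bound $bw(H_{n}^{d})$ by the central coefficient $M(n,d+1)$ via Lemma~\ref{lem:upperbound}, interpret $C_{n}(d,k)/(n+1)^{d}$ as the distribution of a sum of i.i.d.\ uniform variables with mean $n/2$ and variance $(n^{2}+2n)/12$, and apply a local central limit theorem to conclude $M(n,d)=O\bigl((n+1)^{d}/\sqrt{d}\bigr)$. The only cosmetic difference is that you invoke the classical aperiodic-lattice local CLT (or a direct Fourier estimate) where the paper cites Bender's log-concavity version of the local CLT.
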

\begin{proof}
We can think of $x^{0},x^{1},\ldots,x^{n}$ as independent random variables uniformly distributed on $\{0,1,\ldots,n\}$. The expected value $\E(x^{i})$ of each $x^{i}$, $0\leq i\leq n$, is $\frac{n}{2}$ and so

the variance $\sigma^{2}(x^{i})$ of each $x^{i}$ is 
\begin{equation*}
\E((x^{i})^{2})-(\E(x^{i}))^{2}=\frac{1}{n+1}\sum_{i=0}^{n}i^{2}-\frac{n^{2}}{4}
 =\frac{n^{2}+2n}{12}.
\end{equation*}

Hence the mean and variance of the $(d+1)$-fold convolution are $n(d+1)/2$ and $(d+1)(n^{2}+2n)/12$, respectively. By the central limit theorem, the coefficients of 
\[
p(d):=\left(\frac{x_{0}+\cdots+x^{n}}{n+1}\right)^{d+1}
\]
are (asymptotically) normally distributed. Furthermore, since the coefficients in the expansion of $p(d)$ are log-concave,~\cite[Lemma 2]{B73} gives that these coefficients satisfy a \emph{local} central limit theorem around the coefficient $M(n,d+1)/(n+1)^{d+1}$. That is,
\[
\frac{M(n,d+1)}{(n+1)^{d+1}}\sim\frac{1}{\sqrt{2\pi}}\sigma^{-1},
\]
as $1/\sqrt{2\pi}$ is the largest value of the density function of the standard normal distribution (hence we obtain an overestimate). Therefore,
\[
M(n,d+1)\sim(n+1)^{d+1}\sqrt{\frac{6}{\pi(d+1)(n^{2}+2n)}}.
\]

Therefore, 
\begin{equation*}
\lim_{d\to\infty}\frac{bw(H_{n}^{d})}{bw(L_{n}^{d})}\leq\lim_{d\to\infty}\frac{M(n,d+1)}{(n+1)^{d-1}}\sim(n+1)^{2}\sqrt{\frac{6}{\pi(d+1)(n^{2}+2n)}}
\end{equation*}
which tends to zero as $d\to\infty$.
\end{proof}

\section{Further directions}
It would be interesting to obtain some kind of bound for the bandwidth of the product of paths of different length. Moghadam~\cite{M05} showed that the Hales order is optimal for these graphs as well, so finding a recursive procedure to gives the Hales order for these graphs might prove helpful to obtain bounds. 

Finally, it would also be interesting to obtain an explicit formula for the \emph{antibandwidth} of the product of paths. The antibandwidth of a graph $G=(V,E)$ is given by 
\[
abw(G)=\max_{f}\min_{u,v\in E}|f(u)-f(v)|.
\]  That is, $abw(G)$ maximizes the minimum absolute value of the difference of labels of any adjacent pair of vertices, over all labelings of $G$. In~\cite{WWD09}, the authors provide an explicit formula for $abw(P_{1}^{d})$, but to the best of our knowledge no such formula exists for the product of paths $P_{n}^{d}$, $n\geq 2$.


\begin{thebibliography}{10}

\bibitem{BS208}
J{\'o}zsef Balogh and Clifford Smyth.
\newblock On the variance of {S}hannon products of graphs.
\newblock {\em Discrete Appl. Math.}, 156(1):110--118, 2008.

\bibitem{BS08}
Hac{\`e}ne Belbachir and L{\'a}szl{\'o} Szalay.
\newblock Unimodal rays in the ordinary and generalized {P}ascal triangles.
\newblock {\em J. Integer Seq.}, 11(2):Article 08.2.4, 7, 2008.

\bibitem{B73}
Edward~A. Bender.
\newblock Central and local limit theorems applied to asymptotic enumeration.
\newblock {\em J. Combinatorial Theory Ser. A}, 15:91--111, 1973.

\bibitem{B93}
Richard~C. Bollinger.
\newblock Extended {P}ascal triangles.
\newblock {\em Math. Mag.}, 66(2):87--94, 1993.

\bibitem{C75}
Jarmila Chv\'atalov\'a.
\newblock Optimal labelling of a product of two paths.
\newblock {\em Discrete Mathematics}, 11(3):249 -- 253, 1975.

\bibitem{H66}
Lawrence~H. Harper.
\newblock Optimal numberings and isoperimetric problems on graphs.
\newblock {\em J. Combinatorial Theory}, 1:385--393, 1966.

\bibitem{H04}
Lawrence~H. Harper.
\newblock {\em Global {M}ethods for {C}ombinatorial {I}soperimetric
  {P}roblems}.
\newblock Cambridge University Press, 2004.

\bibitem{JM93}
Martin Juvan and Bojan Mohar.
\newblock Laplace eigenvalues and bandwidth-type invariants of graphs.
\newblock {\em J. Graph Theory}, 17(3):393--407, 1993.

\bibitem{LK99}
Yung-Ling Lai and Kenneth Williams.
\newblock A survey of solved problems and applications on bandwidth, edgesum,
  and profile of graphs.
\newblock {\em J. Graph Theory}, 31:75--94, June 1999.

\bibitem{M05}
Hosien~Said Moghadam.
\newblock Bandwidth of the product of {$n$} paths.
\newblock {\em Congr. Numer.}, 173:3--15, 2005.
\newblock 36th Southeastern International Conference on Combinatorics, Graph
  Theory, and Computing.

\bibitem{WWD09}
Xiaohan Wang, Xiaolin Wu, and Sorina Dumitrescu.
\newblock On explicit formulas for bandwidth and antibandwidth of hypercubes.
\newblock {\em Discrete Appl. Math.}, 157(8):1947--1952, 2009.

\end{thebibliography}
\end{document}